\numberwithin{equation}{section}
 \newtheorem{theorem}{Theorem}[section]
\theoremstyle{definition}
\newcommand{\e}{\end{document}}
\begin{document}

\thispagestyle{empty}

\author{
{{\bf Beih S. El-Desouky, Abdelfattah Mustafa and Shamsan AL-Garash}
\newline{\it{{}  }}
 } { }\vspace{.2cm}\\
 \small \it Department of Mathematics, Faculty of Science, Mansoura University, Mansoura 35516, Egypt.
}

\title{The Exponential Flexible Weibull Extension Distribution}

\date{}

\maketitle
\small \pagestyle{myheadings}
        \markboth{{\scriptsize The Exponential Flexible Weibull Extension}}
        {{\scriptsize {Beih S. El-Desouky,  Abdelfattah Mustafa and Shamsan AL-Garash}}}

\hrule \vskip 8pt

\begin{abstract}
This paper is devoted to study a new three- parameters model called the Exponential Flexible Weibull extension (EFWE) distribution which exhibits bathtub-shaped hazard rate. Some of it's statistical properties are obtained including ordinary and incomplete moments, quantile and generating functions, reliability and order statistics. The method of maximum likelihood is used for estimating the model parameters and the observed Fisher's information matrix is derived. We illustrate the usefulness of the proposed model by applications to real data.
\end{abstract}

\noindent
{\bf Keywords:}
{\it Exponential distribution; Flexible Weibull distribution; Exponential Weibull; Reliability; Hazard function; Moments; Maximum likelihood estimation.}

\noindent
{\it {\bf 2010 MSC:}}
{\em  60E05, 62N05, 62H10}


\section{Introduction}
The Weibull distribution (WD) introduced by Weibull \cite{Weibull1951}, is a popular distribution for modeling lifetime data where the hazard rate function is monotone. Recently appeared new classes of distributions were based on modifications of the Weibull distribution (WD) to provide a good fit to data set with bathtub hazard failure rate Xie and Lai \cite{XieandLai1995}. Among of these, Modified Weibull (MW) distribution, Lai et al. \cite{Laietal2003} and Sarhan and Zaindin \cite{SarhanandZaindin2009}, Beta-Weibull (BW)distribution, Famoye et al. \cite{Famoyeetal2005}, Beta modified Weibull (BMW)distribution, Silva et al. \cite{Silvaetal2010} and Nadarajah et al. \cite{Nadarajahetal2011}, Kumaraswamy Weibull (KW) distribution, Cordeiro et al. \cite{Cordeiroetal2010}, Generalized modified Weibull (GMW) distribution, Carrasco et al. \cite{Carrascoetal2008} and Exponentiated modified Weibull extension (EMWE) distribution, Sarhan and Apaloo \cite{SarhanandApaloo2013}, among others.
A good review of these models is presented in Pham and Lai \cite{PhamandLai2007} and Murthy et al. \cite{Murthyetal2003}.

\noindent
The Flexible Weibull (FWE) distribution, Bebbington et al. \cite{Bebbingtonetal2007} has a wide range of applications including life testing experiments, reliability analysis, applied statistics and clinical studies.  The origin and other aspects of this distribution can be found in \cite{Bebbingtonetal2007}.
A random variable $X$ is said to have the Flexible Weibull Extension (FWE) distribution with parameters $\alpha , \beta >0$ if it's probability density function (pdf) is given by
\begin{equation} \label{1}
g(x)=(\alpha +\frac {\beta}{x^2})e^{\alpha x -\frac {\beta }{x}} \exp\left\{-e^{\alpha x -\frac{\beta }{x}}\right\},  \quad x>0,
\end{equation}

\noindent
while the cumulative distribution function (cdf) is given by
\begin{equation} \label{2}
G(x)=1-\exp\left\{-e^{\alpha x -\frac{\beta }{x}}\right\},  \quad x>0.
\end{equation}

\noindent
The survival function is given by the equation
\begin{equation} \label{3}
S(x)=1-G(x)=\exp\left\{-e^{\alpha x -\frac{\beta }{x}}\right\},  \quad x>0,
\end{equation}
and the hazard function is
\begin{equation} \label{4}
h(x)=(\alpha +\frac{\beta }{x^2})e^{\alpha x -\frac{\beta }{x}}.
\end{equation}

\noindent
In this article, a new generalization of the Flexible Weibull Extension (FWE) distribution called exponential flexible Weibull extension (EFWE) distribution is derived. Using the exponential generator applied to the odds ratio $\frac{1}{1-G(x)}$,  such as the exponential Pareto distribution by AL-Kadim and Boshi \cite{Kadim2013}, exponential lomax distribution by El-Bassiouny et al. \cite{Bassiouny2015}.  If $G(x)$ is the baseline cumulative distribution function (cdf) of a random variable, with probability density function (pdf) $g(x)$ and the exponential cumulative distribution function is

\begin{equation} \label{5}
F(x;\lambda )=1-e^{-\lambda  x}, \quad x\geq 0, \quad \lambda \geq 0.
\end{equation}

\noindent
Based on this density, by replacing $x$ with ratio $\frac{1}{1-G(x)}$. The cdf of exponential generalized distribution is defined by (see AL-Kadim and Boshi \cite{Kadim2013} and El-Bassiouny et al. \cite{Bassiouny2015})
\begin{eqnarray} \label{6}
F(x)&=&\int\limits_{0}^{\frac{1}{1-G(x)}}\lambda e^{-\lambda  t}dt
\nonumber \\
&=&
 1-\exp\left\{-\lambda \left[\frac{1}{1-G(x}\right]\right\}, \quad x\geq 0,\quad \lambda  \geq 0,
\end{eqnarray}
where $G(x)$ is a baseline cdf. Hence the pdf corresponding to Eq. (\ref{6}) is given by
\begin{equation} \label{7}
f(x)= \frac{\lambda \cdot  g(x)}{\left[1-G(x)\right]^2}\cdot \exp\left\{-\lambda \left[\frac{1}{1-G(x)}\right]\right\}.
\end{equation}

\noindent
This paper is organized as follows, we define the cumulative, density and hazard functions of the exponential flexible Weibull extension (EFWE) distribution in Section 2. In Sections 3 and 4, we introduce the statistical properties including , quantile function skewness and kurtosis, $rth$ moments and moment generating function. The distribution of the order statistics is expressed in Section 5. The maximum likelihood estimation of the parameters is determined in Section 6. Real data sets are analyzed in Section 7 and the results are compared with existing distributions. Finally, we introduce the conclusions in Section 8.


\section{The Exponential Flexible Weibull Extension Distribution}
In this section we study the three parameters Exponential Flexible Weibull Extension (EFWE) distribution. Using $G(x)$ Eq. (\ref{2}) and $g(x)$ Eq. (\ref{1}) in Eq. (\ref{6}) and Eq. (\ref{7}) to obtained the cdf and pdf of EFWE distribution. The cumulative distribution function cdf of the Exponential Flexible Weibull Extension distribution (EFWE) is given by
\begin{equation} \label{14}
F(x;\alpha ,\beta, \lambda ) = 1- \exp \left\{-\lambda e^{e^{\alpha x-\frac{\beta }{x}}} \right\}, \;  x>0, \;  \alpha, \beta, \lambda  >0.
\end{equation}

\noindent
The pdf corresponding to Eq. (\ref{14}) is given by
\begin{equation} \label{15}
f(x;\alpha ,\beta,\lambda ) = \lambda \left(\alpha +\frac{\beta }{x^2}\right) e^{\alpha x -\frac{\beta }{x}} e^{e^{\alpha x -\frac{\beta }{x}}} \exp\left\{-\lambda e^{e^{\alpha x-\frac{\beta }{x}}}\right\},
\end{equation}

\noindent
where $x>0$ and $, \alpha, \beta >0$ are two additional shape parameters.

\noindent
The survival function $S(x)$, hazard rate function $h(x)$, reversed- hazard rate function $r(x)$ and cumulative hazard rate function $H(x)$ of $X\sim EFWE(\alpha,\beta, \lambda )$ are given by
\begin{equation} \label{16}
S(x;\alpha ,\beta, \lambda  ) =1- F(x;\alpha ,\beta, \lambda )=\exp\left\{-\lambda e^{e^{\alpha x-\frac{\beta }{x}}}\right\}, \;  x>0,
\end{equation}
\begin{equation} \label{17}
h(x;\alpha ,\beta, \lambda  )= \lambda  \left(\alpha +\frac{\beta }{x^2}\right) e^{\alpha x -\frac{\beta }{x}}e^{e^{\alpha x -\frac{\beta }{x}}},
\end{equation}
\begin{equation} \label{18}
r(x;\alpha ,\beta, \lambda ) = \frac{\lambda  \left(\alpha +\frac{\beta }{x^2}\right) e^{\alpha x -\frac{\beta }{x}} e^{e^{\alpha x -\frac{\beta }{x}}} \exp \left\{-\lambda e^{e^{\alpha x-\frac{\beta }{x}}} \right\}}{1- \exp \left\{-\lambda e^{e^{\alpha x-\frac{\beta }{x}}} \right\}},
\end{equation}
\begin{equation} \label{19}
H(x;\alpha,\beta, \lambda  )= \int_0^x h(u)du= \lambda \exp \left\{e^{\alpha x-\frac{\beta }{x}} \right\},
\end{equation}

\noindent
respectively, $x>0$ and $\alpha,\beta, \lambda >0$.

\noindent
Figures 1--6 display the cdf, pdf, survival function, hazard rate function, reversed hazard rate function and cumulative hazard rate function of the EFWE($\alpha$, $\beta$, $\lambda $) distribution for some parameter values.

\begin{center}
\includegraphics[width=10cm,height=6cm]{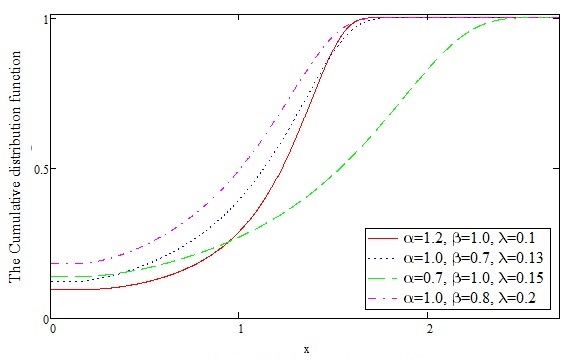}\\
Figure 1: The cdf of the EFWE distribution for different values of parameters.
\end{center}

\vspace{0.4 cm}
\begin{center}
\includegraphics[width=10cm,height=6cm]{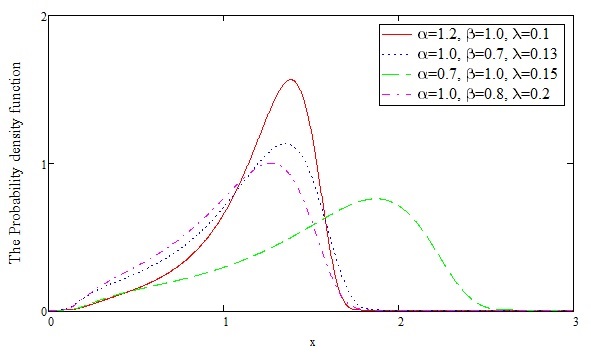}\\
Figure 2: The pdf of the EFWE distribution for different values of parameters.
\end{center}

\vspace{0.4 cm}

\begin{center}
\includegraphics[width=10cm,height=6cm]{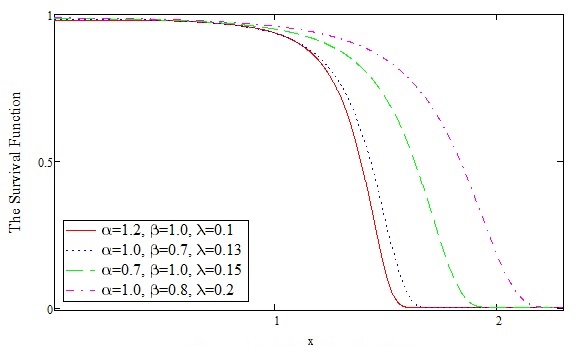}\\
Figure 3: The survival function of the EFWE distribution for different values of parameters.
\end{center}

\vspace{0.4 cm}

\begin{center}
\includegraphics[width=10cm,height=6cm]{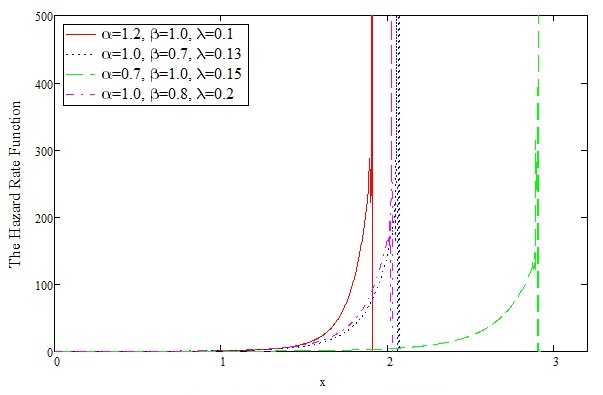}\\
Figure 4: The hazard rate function of the EFWE distribution for different values of parameters.
\end{center}

\vspace{0.4 cm}
\begin{center}
\includegraphics[width=10cm,height=6cm]{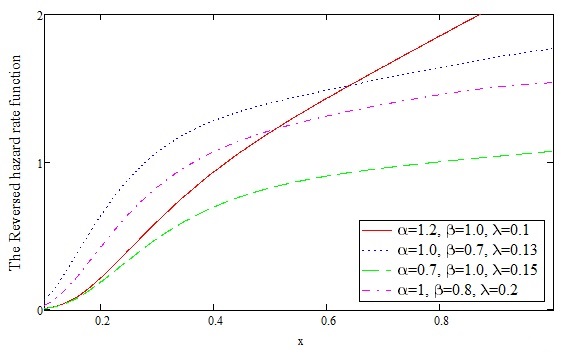}\\
Figure 5: The reversed hazard rate function of the EFWE distribution  for different values of parameters.
\end{center}

\vspace{0.4 cm}
\begin{center}
\includegraphics[width=10cm,height=6cm]{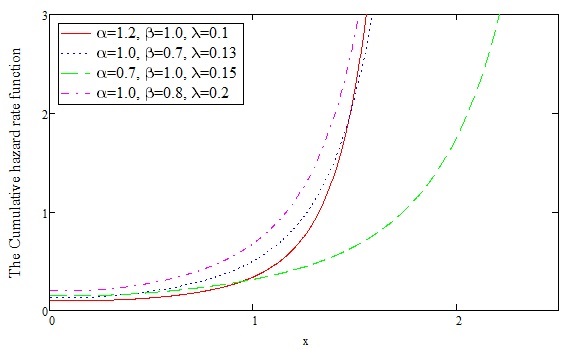}\\
Figure 6: The cumulative hazard rate function of the EFWE distribution for different values of parameters.
\end{center}

\section{Statistical Properties}
In this section, we study the statistical properties for the EFWE distribution, specially quantile function and simulation median, skewness, kurtosis and moments.

\subsection{Quantile and simulation}
The quantile $x_q$ of the EFWE ($\alpha,\beta, \lambda $) distribution random variable is given by
\begin{equation} \label{20}
F(x_q)=q, \quad 0<q<1.
\end{equation}

\noindent
Using the distribution function of EFWE distribution, from (\ref{14}), we have
\begin{equation} \label{21}
\alpha x_q^2-k(q) x_q-\beta=0,
\end{equation}
where
\begin{equation} \label{22}
k(q)=\ln\left\{\ln \left(-\frac{\ln(1-q)}{\lambda }\right) \right\}.
\end{equation}

\noindent
So, the simulation of the EFWE distribution random variable is straightforward. Let $U$ be a uniform variate on the unit interval $(0,1)$. Thus, by means of the inverse transformation method, we consider the random variable $X$ given by
\begin{equation} \label{23}
X=\frac{k(u)\pm\sqrt{k(u)^2+4\alpha \beta}}{2\alpha}.
\end{equation}

\noindent
Since the median is $50$ \% quantile then by setting $q=0.5$ in Eq. (\ref{21}), can be obtained the median $M$ of EFWE distribution.

\subsection{The Mode of EFWE}
In this subsection, we will derive the mode of the EFWE($\alpha,\beta, \lambda $) distribution by deriving its pdf with respect to $x$ and equal it to zero thus the mode of the EFWE($\alpha,\beta, \lambda $) distribution can be obtained as a nonnegative solution of the following nonlinear equation
\begin{eqnarray} \label{mod9}
&&
\lambda e^{\alpha x- \frac{\beta}{x}} e^{e^{\alpha x- \frac{\beta}{x}}} \exp\left\{-\lambda e^{e^{\alpha x- \frac{\beta}{x}}} \right\}
\left[-\frac{2\beta}{x^3}+\left(\alpha+\frac{\beta}{x^2} \right)
\left(1+ e^{\alpha x- \frac{\beta}{x}}-\lambda e^{e^{\alpha x- \frac{\beta}{x}}} \right) \right]=0
\end{eqnarray}%

\noindent
From Figure \ref{fig2}, the pdf for EFWE distribution has only one peak, It is a unimodal distribution, so the above equation has only one solution. It is not possible to get an explicit solution of (\ref{mod9}) in the general case. Numerical methods should be used such as bisection or fixed-point method to solve it. Some values of median and mode for various values of parameters $\alpha, \beta$ and $\lambda$ calculated in Table 1.

\begin{center}
Table 1: The median and mode for EFWE($\alpha, \beta, \lambda$)\\
\begin{tabular}{ccccc} \hline
$\alpha$ & $\beta$ & $\lambda$ & median & mode \\ \hline
0.015   & 0.381   & 0.076  &   53.3576   &   10.6657  \\
0.158   & 0.158   & 0.273  &  0.801066   &  1.96923\\
0.700   & 1.000   & 0.150  &  1.537340   &  1.87122\\
1.000   & 0.700   & 0.130  &  1.132920   & 1.35312 \\
1.000   & 0.800   & 0.200  &  1.009750   & 1.27259 \\
1.200   & 1.000   & 0.100  &  1.228750   & 1.38465  \\
\hline
\end{tabular}
\end{center}

\subsection{The Skewness and Kurtosis}
The analysis of the variability Skewness and Kurtosis on the shape parameters $\alpha,\beta$  can be investigated based on quantile measures. The short comings of the classical Kurtosis measure are well-known. The Bowely's skewness based on quartiles is given by, Kenney and Keeping \cite{KenneyandKeeping1962}
\begin{equation} \label{24}
S_k=\frac{ q_{(0.75)} -2 q_{(0.5)}+q_{(0.25)}}{q_{(0.75)}- q_{(0.25)}},
\end{equation}

\noindent
and the Moors Kurtosis based on quantiles, Moors \cite{Moors1998}
\begin{equation} \label{25}
K_u=\frac{ q_{(0.875)} - q_{(0.625)}-q_{(0.375)}+ q_{(0.125)}}{q_{(0.75)}- q_{(0.25)}},
\end{equation}

\noindent
where $q_{(.)}$ represents quantile function.

\subsection{The Moments}
In this subsection we discuss the $rth$ moment for EFWE distribution. Moments are important in any statistical analysis, especially in applications. It can be used to study the most important features and characteristics of  a distribution (e.g. tendency, dispersion, skewness and kurtosis).
\begin{theorem} \label{Th1}
If $X$ has EFWE $(\alpha ,\beta, \lambda )$ distribution, then the $r$th moments of random variable $X$, is given by the following
\begin{equation}  \label{26}
\mu_ r^{'}= \sum_{i=0}^{\infty}\sum_{j=0}^{\infty}\sum_{k=0}^{\infty}
\frac{(-1)^{i+k} \lambda ^{i+1}\beta ^k (i+1)^j(j+1)^k}
{i! j! k!} \left[\frac{\Gamma(r-k+1)}{\alpha ^{r-k} (j+1)^{r-k+1}}+\frac{\beta \Gamma(r-k-1)}{\alpha ^{r-k-1} (j+1)^{r-k-1}}\right].
\end{equation}
\end{theorem}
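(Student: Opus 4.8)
\noindent\textit{Proof sketch.} The plan is to compute $\mu_r'=\int_0^{\infty}x^{r}f(x;\alpha,\beta,\lambda)\,dx$ directly from the density \eqref{15}, reducing the integrand to a triple power series of elementary terms each of which integrates to a gamma value; the three summation indices $i,j,k$ of \eqref{26} will arise from three successive Maclaurin expansions.

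First I would strip the outermost exponential in \eqref{15}: since $\exp\{-\lambda e^{e^{\alpha x-\beta/x}}\}=\sum_{i=0}^{\infty}\frac{(-\lambda)^{i}}{i!}\,e^{\,i\,e^{\alpha x-\beta/x}}$, multiplying by the factor $e^{e^{\alpha x-\beta/x}}$ already present in \eqref{15} produces $\sum_{i\ge0}\frac{(-\lambda)^{i}}{i!}\,e^{(i+1)e^{\alpha x-\beta/x}}$. Next I would expand each exponential-of-an-exponential, $e^{(i+1)e^{\alpha x-\beta/x}}=\sum_{j=0}^{\infty}\frac{(i+1)^{j}}{j!}\,e^{\,j(\alpha x-\beta/x)}$, and absorb the lone remaining factor $e^{\alpha x-\beta/x}$ from \eqref{15} to get $e^{(j+1)(\alpha x-\beta/x)}$. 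Finally I would expand the $1/x$ part, $e^{-(j+1)\beta/x}=\sum_{k=0}^{\infty}\frac{(-1)^{k}(j+1)^{k}\beta^{k}}{k!}\,x^{-k}$. Collecting the three series together with the prefactor $\lambda(\alpha+\beta/x^{2})$ and the weight $x^{r}$ yields, formally,
\begin{equation*}
x^{r}f(x)=\sum_{i=0}^{\infty}\sum_{j=0}^{\infty}\sum_{k=0}^{\infty}\frac{(-1)^{i+k}\lambda^{i+1}\beta^{k}(i+1)^{j}(j+1)^{k}}{i!\,j!\,k!}\Bigl(\alpha x^{r-k}+\beta x^{r-k-2}\Bigr)e^{-(j+1)\alpha x}.
\end{equation*}
Granting that the triple sum may be exchanged with integration over $(0,\infty)$, each surviving integral is a standard gamma integral $\int_0^{\infty}x^{s-1}e^{-px}\,dx=\Gamma(s)\,p^{-s}$: applying it with $p=(j+1)\alpha$, with $s=r-k+1$ on the $\alpha x^{r-k}$ piece and $s=r-k-1$ on the $\beta x^{r-k-2}$ piece, reproduces exactly the bracketed factor $\frac{\Gamma(r-k+1)}{\alpha^{r-k}(j+1)^{r-k+1}}+\frac{\beta\Gamma(r-k-1)}{\alpha^{r-k-1}(j+1)^{r-k-1}}$, and the coefficient already standing in front is the one displayed in \eqref{26}, which is the claimed identity.

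The step I expect to be the genuine obstacle is the exchange of the infinite summations with the integral. After only the first expansion the generic summand already contains $e^{\,i\,e^{\alpha x-\beta/x}}$, which grows doubly exponentially in $x$ and is therefore not integrable against $x^{r}$ on its own; after the third expansion the power $x^{r-k-2}$ fails to be integrable near $0$ for small $r$, a defect mirrored in the poles of $\Gamma(r-k-1)$. Hence Fubini--Tonelli cannot be invoked naively term by term. A careful treatment should instead keep the whole factor $\exp\{-\lambda e^{e^{\alpha x-\beta/x}}\}$ (or a truncated tail of its series) intact as a dominating weight, expand only the algebraically growing factors, integrate first on a compact interval $[\varepsilon,T]$, and then let $\varepsilon\to0$, $T\to\infty$ and the truncation level $N\to\infty$ using monotone/dominated convergence. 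Once that justification is supplied, identifying the gamma integrals and bookkeeping the constants is routine.
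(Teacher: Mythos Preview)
Your route is exactly the paper's: expand $\exp\{-\lambda e^{e^{\alpha x-\beta/x}}\}$ as a power series in $i$, then $e^{(i+1)e^{\alpha x-\beta/x}}$ in $j$, then $e^{-(j+1)\beta/x}$ in $k$, and read off gamma integrals term by term. One slip to flag: after the three expansions the surviving exponential factor is $e^{+(j+1)\alpha x}$, not $e^{-(j+1)\alpha x}$ as in your displayed formula; the paper keeps the positive sign and then appeals to a formally written gamma identity $\Gamma(z)=x^{z}\int_0^{\infty}e^{tx}t^{z-1}\,dt$ to land on \eqref{26}, so in either presentation the final integration step is purely formal rather than a convergent evaluation.

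Your closing caveats about interchanging the triple sum with the integral are well placed and in fact go beyond what the paper does: the paper carries out all three expansions and the termwise integration without any justification, treating \eqref{26} as a formal series identity. The obstructions you name---the doubly exponential growth of $e^{\,i\,e^{\alpha x-\beta/x}}$ for large $x$, the non-integrability of $x^{r-k-2}$ near $0$, and the attendant poles of $\Gamma(r-k-1)$---are genuine, and the paper offers no remedy for them.
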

\begin{proof}
We start with the well known distribution of the $r$th moment of the random variable $X$ with probability density function $f(x)$ given by
\begin{equation} \label{27}
\mu_ r^{'}=\int\limits_{0}^{\infty} x^r f(x;\alpha ,\beta, \lambda  ) dx.
\end{equation}

\noindent
Substituting from Eq. (\ref{15}) into Eq. (\ref{27}) we get
\begin{equation*}
\mu_ r^{'}= \int_0^\infty x^r \lambda \left(\alpha +\frac{\beta }{x^2}\right) e^{\alpha x -\frac{\beta }{x}} e^{e^{\alpha x -\frac{\beta }{x}}} \exp\left\{-\lambda e^{e^{\alpha x-\frac{\beta }{x}}}\right\} dx,
\end{equation*}

\noindent
 the expansion of
$\exp\left\{-\lambda e^{e^{\alpha x-\frac{\beta }{x}}}\right\}$ is
$$
e^{-\lambda e^{e^{\alpha x-\frac{\beta }{x}}}}=\sum_{i=0}^{\infty}\frac{(-1)^i \lambda ^i}{i!} e^{i e^{\alpha x -\frac{\beta }{x}}},
$$
then we get
\begin{equation*}
\mu_ r^{'}= \sum_{i=0}^{\infty} \frac{(-1)^i \lambda ^{i+1}}{i!} \int_0^{\infty} x^r \left(\alpha  +\frac{\beta }{x^2}\right) e^{\alpha x - \frac{\beta }{x}} e^{(i+1)e^{\alpha x - \frac{\beta }{x}}} dx,
\end{equation*}
 \noindent
using series expansion of $e^{(i+1) e^{\alpha x - \frac{\beta }{x}}}$,
 \[
 e^{(i+1) e^{\alpha x - \frac{\beta }{x}}} =\sum_{j=0}^{\infty}\frac{(i+1)^j}{j!} e^{j(\alpha x -\frac{\beta }{x})},
 \]
we obtain
\begin{eqnarray*}
\mu_ r^{'} & = & \sum_{i=0}^{\infty}\sum_{j=0}^{\infty} \frac{(-1)^i \lambda ^{i+1}(i+1)^j}{i! j!} \int_0^{\infty} x^r \left(\alpha  +\beta x^{-2}\right) e^{(j+1)(\alpha x - \frac{\beta }{x})} dx,
\\
&= & \sum_{i=0}^{\infty}\sum_{j=0}^{\infty} \frac{(-1)^i \lambda ^{i+1}(i+1)^j}{i! j!}  \int_0^{\infty} x^r \left(\alpha  +\beta x^{-2}\right) e^{(j+1)\alpha  x} e^{-(j+1)\frac{\beta }{x}} dx,
\end{eqnarray*}

\noindent
using series expansion of  $e^{-(j+1)\frac{\beta }{x}}$,
\[
e^{-(j+1)\frac{\beta }{x}}= \sum_{k=0}^{\infty}\frac{(-1)^k (j+1)^k \beta ^k }{k!} x^{-k},
\]
we have
\begin{eqnarray*}
\mu_ r^{'} & = & \sum_{i=0}^{\infty}\sum_{j=0}^{\infty}\sum_{k=0}^{\infty} \frac{(-1)^{i+k} \lambda ^{i+1} \beta ^k (i+1)^j (j+1)^k}{i! j! k!}\int_0^{\infty} x^{r-k} \left(\alpha  +\frac{\beta}{ x^2}\right) e^{(j+1)\alpha x}  dx,
\\
 &= & \sum_{i=0}^{\infty}\sum_{j=0}^{\infty}\sum_{k=0}^{\infty} \frac{(-1)^{i+k} \lambda ^{i+1} \beta ^k (i+1)^j (j+1)^k}{i! j! k!} \left[\int_0^{\infty} \alpha x^{r-k} e^{(j+1)\alpha x}  dx
\right.
\\
&& \left.
+ \int_0^{\infty}\beta x^{r-k-2} e^{(j+1)\alpha x}  dx\right],
\end{eqnarray*}

\noindent
by using the definition of gamma function ( Zwillinger \cite{Zwillinger2014}), in the form,
\[
\Gamma (z)=x^z \int_0^{\infty}e^{tx} t^{z-1}dt, \quad z,x,>0.
\]
Finally, we obtain the $r$th moment of EFWE distribution in the form
\begin{equation*}
\mu_ r^{'}= \sum_{i=0}^{\infty}\sum_{j=0}^{\infty}\sum_{k=0}^{\infty}
\frac{(-1)^{i+k} \lambda ^{i+1}\beta ^k (i+1)^j(j+1)^k}
{i! j! k!}\times \left[\frac{\Gamma(r-k+1)}{\alpha ^{r-k} (j+1)^{r-k+1}}+\frac{\beta \Gamma(r-k-1)}{\alpha ^{r-k-1} (j+1)^{r-k-1}}\right].
\end{equation*}
This completes the proof.

\end{proof}

\section{The Moment Generating Function}
The moment generating function (mgf) $M_X (t)$ of a random variable $X$ provides the basis of an alternative route to analytic results compared with working directly with the pdf and cdf of $X$.
\begin{theorem}
The moment generating function (mgf) of EFWE distribution is given by
\begin{eqnarray}
M_X(t) & = & \sum_{i=0}^{\infty}\sum_{j=0}^{\infty}\sum_{k=0}^{\infty}\sum_{r=0}^{\infty}
\frac{(-1)^{i+k} \lambda ^{i+1}\beta ^k (i+1)^j(j+1)^k t^r}{i! j! k! r!}  \times
\nonumber\\
&& \hspace{3 cm}
\left[\frac{\Gamma(r-k+1)}{\alpha ^{r-k} (j+1)^{r-k+1}}+\frac{\beta \Gamma(r-k-1)}{\alpha ^{r-k-1} (j+1)^{r-k-1}}\right].
\end{eqnarray}
\end{theorem}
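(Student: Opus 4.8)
The plan is to exploit the standard bridge between the moment generating function and the sequence of raw moments, namely $M_X(t)=E\!\left[e^{tX}\right]=\sum_{r=0}^{\infty}\frac{t^r}{r!}\,\mu_r'$, and then to insert the closed form for $\mu_r'$ already established in Theorem \ref{Th1}.

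First I would write $M_X(t)=\int_0^{\infty}e^{tx}f(x;\alpha,\beta,\lambda)\,dx$ and expand the exponential in its Maclaurin series, $e^{tx}=\sum_{r=0}^{\infty}\frac{t^r x^r}{r!}$. Interchanging the sum and the integral gives $M_X(t)=\sum_{r=0}^{\infty}\frac{t^r}{r!}\int_0^{\infty}x^r f(x;\alpha,\beta,\lambda)\,dx=\sum_{r=0}^{\infty}\frac{t^r}{r!}\,\mu_r'$, which is the only analytic step in the argument.

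Next I would substitute the triple-series representation of $\mu_r'$ from Eq. (\ref{26}) into this expression. Since each term of $\mu_r'$ already carries the factor $\frac{1}{i!\,j!\,k!}$ and the bracketed gamma expression, multiplying by $\frac{t^r}{r!}$ and summing over $r$ simply appends a fourth summation index $r$ together with the factor $\frac{t^r}{r!}$, producing exactly the claimed quadruple sum; beyond relabelling no further manipulation is required.

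The main technical point to watch is the interchange of summation and integration (and, implicitly, the rearrangement of the resulting fourfold series): this is legitimate on the set of $t$ for which the series converges absolutely, and it can be justified by applying Tonelli's theorem to the nonnegative integrand $|t|^r x^r f(x)/r!$ together with the finiteness of the moments $\mu_r'$ guaranteed by Theorem \ref{Th1}. Once this is granted, the identity is purely formal and the proof is complete.
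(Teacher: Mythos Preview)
Your proposal is correct and follows essentially the same approach as the paper: expand $e^{tx}$ as a power series, interchange sum and integral to obtain $M_X(t)=\sum_{r\ge 0}\frac{t^r}{r!}\mu_r'$, and then substitute the expression for $\mu_r'$ from Theorem~\ref{Th1}. Your added remark about justifying the interchange via Tonelli's theorem is a welcome bit of extra care that the paper omits.
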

\begin{proof}
The moment generating function of the random variable $X$ with probability density function $f(x)$ is given by
\begin{equation} \label{28}
M_X(t)=\int\limits_{0}^{\infty} e^{tx} f(x) dx.
\end{equation}

\noindent
using series expansion of $e^{tx}$, we obtain
\begin{eqnarray} \label{28a}
M_X(t) & = & \sum_{r=0}^\infty \frac{t^r}{r!} \int_0^{\infty} x^r f(x) dx.
\nonumber\\
& = &
\sum_{r=0}^\infty \frac{t^r}{r!} \mu_r^{'}.
\end{eqnarray}

\noindent
Substituting from Eq. (\ref{26}) into Eq. (\ref{28a}) we obtain the moment generating function (mgf) of EFWE distribution in the form
\begin{eqnarray*}
M_X(t) & = & \sum_{i=0}^{\infty}\sum_{j=0}^{\infty}\sum_{k=0}^{\infty}\sum_{r=0}^{\infty}
\frac{(-1)^{i+k} \lambda ^{i+1}\beta ^k (i+1)^j(j+1)^k t^r}
{i! j! k! r!}\times
\\
&& \hspace{3cm}
\left[\frac{\Gamma(r-k+1)}{\alpha ^{r-k} (j+1)^{r-k+1}}+\frac{\beta \Gamma(r-k-1)}{\alpha ^{r-k-1} (j+1)^{r-k-1}}\right].
\end{eqnarray*}

\noindent
This completes the proof.
\end{proof}

\section{Order Statistics }
In this section, we derive closed form expressions for the probability density function of the $r$th order statistic of the EFWE distribution.
Let $X_{1:n},X_{2:n},\cdots,X_{n:n}$ denote the order statistics obtained from a random sample $X_1$, $X_2$, $\cdots$, $X_n$ which taken from a continuous population with cumulative distribution function cdf $F(x;\varphi )$ and probability density function pdf $f(x;\varphi )$, then the probability density function of $X_{r:n}$ is given by

\begin{equation} \label{29}
f_{r:n}(x;\varphi )=\frac{1}{B(r,n-r+1)}\left[F(x;\varphi )\right]^{r-1} \left[1-F(x;\varphi )\right]^{n-r} f(x;\varphi),
\end{equation}

\noindent
where $f(x;\varphi)$, $F(x;\varphi)$ are the pdf and cdf of EFWE $(\alpha ,\beta, \lambda)$ distribution given by Eq. (\ref{15}) and Eq. (\ref{14}) respectively, $\varphi =(\alpha ,\beta, \lambda  )$ and $B(.,.)$ is the Beta function, also we define first order statistics $X_{1:n}= \min(X_1,X_2,\cdots,X_n)$, and the last order statistics as $X_{n:n}= \max (X_1,X_2,\cdots,X_n)$. Since $0 < F(x;\varphi )< 1$  for $x>0$, we can use the binomial expansion of $[1-F(x;\varphi )]^{n-r}$ given as follows
\begin{equation} \label{30}
\left[1-F(x;\varphi )\right]^{n-r}=\sum_{i=0}^{n-r}\begin{pmatrix} n-r \\ i \end{pmatrix} (-1)^i [F(x;\varphi )]^i.
\end{equation}

\noindent
Substituting from Eq. (\ref{30}) into Eq. (\ref{29}), we obtain
\begin{equation} \label{31}
f_{r:n}(x;\varphi )=\frac{1}{B(r,n-r+1)} f(x;\varphi )\sum_{i=0}^{n-r}\begin{pmatrix} n-r \\ i \end{pmatrix} (-1)^i \left[F(x;\varphi )\right]^{i+r-1}.
\end{equation}

\noindent
Substituting from Eq. (\ref{14}) and Eq. (\ref{15}) into Eq. (\ref{31}), we obtain
\begin{equation} \label{32}
f_{r:n}(x;\varphi)=\sum_{i=0}^{n-r} \frac{(-1)^i n!}{i! (r-1)! (n-r-i)!} \left[F(x,\varphi)\right]^{i+r-1} f(x;\varphi).
\end{equation}

\noindent
Relation (\ref{32}) shows that $f_{r:n}(x;\varphi )$ is the weighted average of the Exponential Flexible Weibull Extension distribution withe different shape parameters.

\section{Parameters Estimation}
In this section, point and interval estimation of the unknown parameters of the EFWE distribution are derived by using the method of maximum likelihood based on a complete sample.

\subsection{Maximum likelihood estimation}
Let $x_1,x_2,\cdots ,x_n$ denote a random sample of complete data from the EFWE distribution. The Likelihood function is given as
\begin{equation} \label{33}
L = \prod_{i=1}^{n} f(x_i;\alpha, \beta, \lambda ),
\end{equation}

\noindent
substituting from (\ref{15}) into (\ref{33}), we have
\begin{equation*}
L = \prod_{i=1}^{n}\lambda \left(\alpha +\frac{\beta }{x^2}\right) e^{\alpha x -\frac{\beta }{x}} e^{e^{\alpha x -\frac{\beta }{x}}} \exp\left\{-\lambda e^{e^{\alpha x-\frac{\beta }{x}}}\right\}.
\end{equation*}

\noindent
The log-likelihood function is
\begin{equation} \label{34}
\mathcal{L} = n \ln(\lambda ) + \sum_{i=1}^{n} \ln (\alpha +\frac{\beta }{x_i^2}) +  \sum_{i=1}^{n} \left(\alpha x_i -\frac{\beta }{x_i}\right) + \sum_{i=1}^{n} e^{\alpha x_i -\frac{\beta }{x_i}} -\lambda \sum_{i=1}^{n} e^{e^{\alpha x_i-\frac{\beta }{x_i}}}.
\end{equation}

\noindent
The maximum likelihood estimation of the parameters are obtained by differentiating the log-likelihood function $\mathcal{L}$ with respect to the parameters $\alpha, \beta $ and $\lambda  $ and setting the result to zero, we have the following normal equations.
\begin{eqnarray} \label{35}
\frac{\partial \mathcal{L}}{\partial \alpha } & = &
\sum_{i=1}^{n}\frac{ x_i^2}{\beta+\alpha x_i^2 }+\sum_{i=1}^{n} x_i +\sum_{i=1}^{n} x_i e^{\alpha x_i-\frac{\beta }{x_i}} - \lambda  \sum_{i=1}^{n} x_i e^{\alpha x_i-\frac{\beta }{x_i}}
e^{e^{\alpha x_i-\frac{\beta }{x_i}}} = 0,
\\ \label{36}
\frac{\partial \mathcal{L}}{\partial \beta  } & = &
\sum_{i=1}^{n}\frac{1}{\beta+\alpha x_i^2}-\sum_{i=1}^{n} \frac{1}{x_i} - \sum_{i=1}^{n} \frac{1}{x_i} e^{\alpha x_i-\frac{\beta }{x_i}} + \lambda  \sum_{i=1}^{n} \frac{1}{x_i}e^{\alpha x_i-\frac{\beta }{x_i}}e^{e^{\alpha x_i-\frac{\beta }{x_i}}} = 0,
\\ \label{37}
\frac{\partial \mathcal{L} }{\partial \lambda } &=&
\frac{n}{\lambda }- \sum_{i=1}^{n} e^{e^{\alpha x_i-\frac{\beta }{x_i}}}= 0.
\end{eqnarray}

\noindent
The MLEs can be obtained by solving the nonlinear equations previous, (\ref{35})-(\ref{37}), numerically for $\alpha, \beta$ and $\lambda $.

\subsection{Asymptotic confidence bounds}
In this section, we derive the asymptotic confidence intervals when $\alpha, \beta >0$ and $\lambda >0$ as the MLEs of the unknown parameters $\alpha, \beta >0$ and $\lambda>0$ can not be obtained in closed forms, by using variance covariance matrix $I^{-1}$ see Lawless \cite{Lawless2003}, where $I^{-1}$ is the inverse of the observed information matrix which defined as follows
\begin{eqnarray} \label{39}
\mathbf{I^{-1}} &= &
\left(
\begin{array}{ccc}
-\frac{\partial ^2 \mathcal{L}}{\partial \alpha  ^2} & -\frac{\partial ^2 \mathcal{L}}{\partial \alpha  \partial \beta } & -\frac{\partial ^2 \mathcal{L}}{\partial \alpha \partial \lambda  }
\\
-\frac{\partial ^2 \mathcal{L}}{\partial \beta  \partial \alpha } & -\frac{\partial ^2 \mathcal{L}}{\partial \beta^2} & -\frac{\partial ^2 \mathcal{L}}{\partial \beta \partial \lambda  }
\\
-\frac{\partial ^2 \mathcal{L}}{\partial \lambda  \partial \alpha } & -\frac{\partial ^2 \mathcal{L}}{\partial \lambda  \partial \beta } & -\frac{\partial ^2 \mathcal{L}}{\partial \lambda ^2}
\end{array}
\right)^{-1}
 =
\left(
\begin{array}{ccc}
var(\hat{\alpha }) & cov( \hat{\alpha }, \hat{\beta }) & cov( \hat{\alpha }, \hat{ \lambda  })
\\
cov( \hat{\beta },\hat{\alpha  }) & var( \hat{\beta }) & cov( \hat{\beta }, \hat{ \lambda  })
\\
cov( \hat{ \lambda  }, \hat{\alpha }) & cov( \hat{ \lambda }, \hat{\beta }) &  var( \hat{ \lambda })
\end{array}
\right).
\end{eqnarray}

\noindent
The second partial derivatives included in   $I$  are given as follows.
\begin{eqnarray} \label{40}
\frac{\partial ^2 \mathcal{L}}{\partial \alpha^2 } &= &
-\sum_{i=1}^{n}\frac{x_i^4}{\left(\beta+\alpha x_i^2\right)^2} +\sum_{i=1}^{n} x_i^2 e^{\alpha x_i -\frac{\beta }{x_i}} - \lambda \sum_{i=1}^n x_i^2 e^{\alpha x_i-\frac{\beta }{x_i}}e^{e^{\alpha x_i-\frac{\beta }{x_i}}} \left[1+e^{\alpha x_i -\frac{\beta }{x_i}} \right],
\\ \label{41}
\frac{\partial ^2 \mathcal{L}}{\partial \alpha \partial \beta } &= &
-\sum_{i=1}^{n}\frac{x_i^2}{\left(\beta+\alpha x_i^2\right)^2} -\sum_{i=1}^{n} e^{\alpha x_i -\frac{\beta }{x_i}} + \lambda \sum_{i=1}^n e^{\alpha x_i-\frac{\beta }{x_i}}e^{e^{\alpha x_i-\frac{\beta }{x_i}}} \left[1+e^{\alpha x_i -\frac{\beta }{x_i}} \right],
\\ \label{42}
\frac{\partial ^2 \mathcal{L}}{\partial \alpha  \partial \lambda  } &= &
-\sum_{i=1}^{n} x_i e^{\alpha x_i-\frac{\beta }{x_i}}e^{e^{\alpha x_i-\frac{\beta }{x_i}}},
\\ \label{43}
\frac{\partial ^2 \mathcal{L}}{\partial \beta^2 } &= &
-\sum_{i=1}^{n}\frac{1}{\left(\beta+\alpha x_i^2\right)^2} +\sum_{i=1}^{n}\frac{1}{x_i^2} e^{\alpha x_i -\frac{\beta }{x_i}} - \lambda   \sum_{i=1}^n \frac{1}{x_i^2} e^{\alpha x_i-\frac{\beta }{x_i}}e^{e^{\alpha x_i-\frac{\beta }{x_i}}} \left [1+e^{\alpha x_i -\frac{\beta }{x_i}} \right],
\nonumber\\
&&
\\ \label{44}
\frac{\partial ^2 \mathcal{L}}{\partial \beta  \partial \lambda  }&= &
\sum_{i=1}^{n}\frac{1}{x_i} e^{\alpha x_i-\frac{\beta }{x_i}}e^{e^{\alpha x_i-\frac{\beta }{x_i}}} ,
 \\ \label{45}
\frac{\partial ^2 \mathcal{L}}{\partial \lambda ^2} &=& -\frac{n}{\lambda ^2}.
\end{eqnarray}

\noindent
We can derive the $(1-\delta)100\%$ confidence intervals of the parameters $\alpha, \beta$ and $\lambda$, by using variance matrix as in the following forms
$$ \hat{\alpha} \pm Z_{\frac{\delta}{2}}\sqrt{var(\hat{\alpha })},\quad \hat{\beta} \pm Z_{\frac{\delta}{2}}\sqrt{var(\hat{\beta})}, \quad \hat{\lambda } \pm Z_{\frac{\delta}{2}}\sqrt{var(\hat{\lambda })}, $$
where $Z_{\frac{\delta}{2}}$ is the upper $(\frac{\delta}{2})$-th percentile of the standard normal distribution.


\section{Application}
In this section, we present the analysis of a real data set using the EFWE $(\alpha, \beta, \lambda )$ model and compare it with the other fitted models like A flexible Weibull extension (FWE) distribution, Weibull distribution (WD) , linear failure rate distribution (LFRD), exponentiated Weibull distribution(EWD), generalized linear failure rate distribution (GLFRD) and  exponentiated flexible Weibull distribution (EFWD) using Kolmogorov Smirnov (K-S) statistic, as well as Akaike information criterion (AIC), \cite{Akaike1974}, Akaike Information Citerion with correction (AICC) and Bayesian information criterion (BIC) \cite{Schwarz1978} values.

\noindent
 Consider the data have been obtained from Aarset \cite{Aarset1987}, and widely reported in many literatures. It represents the lifetimes of 50 devices, and also, possess a bathtub-shaped failure rate property, Table 2.

\begin{center}
Table 2: Life time of 50 devices, see Aarset \cite{Aarset1987}.\\
\begin{tabular}{ccccccccccccccccc} \hline
0.1	& 0.2 & 1  & 1	& 1  & 1  & 1  & 2  & 3  & 6 \\
7   & 11  & 12 & 18	& 18 & 18 & 18 & 18 & 21 & 32 \\	
36  & 40  & 45 & 46 & 47 & 50 & 55 & 60 & 63 & 63 \\	
67  & 67  & 67 & 67 & 72 & 75 & 79 & 82 & 82 & 83 \\	
84  & 84  & 84 & 85 & 85 & 85 & 85 & 85 & 86 & 86 & \\	\hline
\end{tabular}
\end{center}

\noindent
Table 3 gives MLEs of parameters of the EFWE distribution and K-S Statistics. The values of the log-likelihood functions, AIC, AICC and BIC are in Table 4.

\begin{center}
Table 3: MLEs and K--S of parameters for Aarset data \cite{Aarset1987}.\\
\begin{tabular}{lcccccccc} \hline
Model                    & MLE of the parameters  &  K-S & P-value \\ \hline
FW($\alpha, \beta$ )	 & $\hat{\alpha}$ = 0.0122, $\hat{\beta}$ = 0.7002 & 	0.4386   & 4.29 $\times 10^{-9}$  \\
W($\alpha, \beta$ )    & $\hat{\alpha}$ = 44.913, $\hat{\beta}$ = 0.949 & 0.2397   & 0.0052 \\
LFR (a, b)	         &$\hat{a}$ = 0.014, $\hat{b}$ = 2.4 $\times 10^{-4}$ & 	0.1955   & 0.0370 \\
EW($\alpha, \beta, \gamma $) & $\hat{\alpha}$ = 91.023, $\hat{\beta}$ = 4.69, $\hat{\gamma}$ = 0.164 & 0.1841  & 0.0590 \\
GLFR(a, b, c)    & $\hat{a}$ = 0.0038, $\hat{b}$ = 3.04 $\times 10^{-4}$, $\hat{c}$ = 0.533  & 	0.1620   & 0.1293 \\
EFW($\alpha , \beta , \theta$ )& $\hat{\alpha}$ = 0.0147, $\hat{\beta}$ = 0.133, $\hat{\theta}$ = 4.22  & 0.1433 & 0.2617  \\
EFWE($\alpha , \beta , \lambda$ ) & $\hat{\alpha}$ = 0.015, $\hat{\beta}$ = 0.381, $\hat{\lambda }$ = 0.076 & 0.13869 & 0.2719\\ \hline
\end{tabular}
\end{center}

\vspace{0.4 cm}

\begin{center}
Table 4: Log-likelihood, AIC, AICC and BIC values of models fitted for Aarset data \cite{Aarset1987}.\\
\begin{tabular}{lccccccc} \hline
Model	                 & $\mathcal{L}$ &-2 $\mathcal{L}$ &	AIC	& AICC	& BIC   \\ \hline
FW($\alpha, \beta $)	            & -250.810 & 501.620 & 505.620 & 505.88	 & 509.448   \\
W($\alpha, \beta$ ) 	            & -241.002 & 482.004 & 486.004 & 486.26  & 489.828 \\
LFR (a, b)                          & -238.064 & 476.128 & 480.128 & 480.38  & 483.952 \\
EW($\alpha, \beta, \gamma$ ) 	    & -235.926 & 471.852 & 477.852 & 478.37  & 483.588 \\
GLFR(a, b, c)                       & -233.145 & 466.290 & 472.290 & 472.81	 & 478.026 \\
EFW($\alpha , \beta , \theta$ )     & -226.989 & 453.978 & 459.979 & 460.65	 & 465.715 \\
EFWE($\alpha , \beta , \lambda$ )   & -224.832 & 449.664 & 455.664 & 456.19  & 461.400 \\ \hline
\end{tabular}
\end{center}

\noindent
We find that the EFWE distribution with the three-number of parameters provides a better fit than the previous new modified
a flexible Weibull extension distribution(FWE) which was the best in Bebbington et al. \cite{Bebbingtonetal2007}. It has the largest likelihood, and the smallest K-S, AIC, AICC and BIC values among those considered in this paper.\\

\noindent
Substituting the MLE's of the unknown parameters $ \alpha, \beta, \lambda  $  into (\ref{39}), we get estimation of the variance covariance matrix as the following

$$
I_0^{-1}=\left(
\begin{array}{rrrr}

1.11 \times 10^{-6}	 & -1.175 \times 10^{-5}	 & -2.187\times 10^{-5} \\
-1.175\times 10^{-5}	& 0.021	 & 3.275\times 10^{-4} \\
-2.187\times 10^{-5}	& 3.275\times10^{-4}	& 5.469\times 10^{-4}
\end{array}
\right)
$$

\noindent
The approximate 95\% two sided confidence intervals of the unknown parameters $\alpha, \beta$ and $\lambda $ are $\left[
0.013,	0.017\right]$, $\left[0.1, 0.662\right]$ and  $\left[0.03, 0.122\right]$, respectively.\\

\noindent
To show that the likelihood equation have unique solution, we plot the profiles of the log-likelihood function of $\alpha,  \beta$ and  $\lambda $ in Figures 7 and 8.

\begin{center}
\includegraphics[width=7.2cm,height=6.2cm]{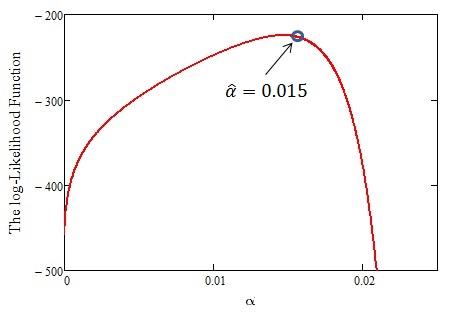}
\includegraphics[width=7.2cm,height=6.2cm]{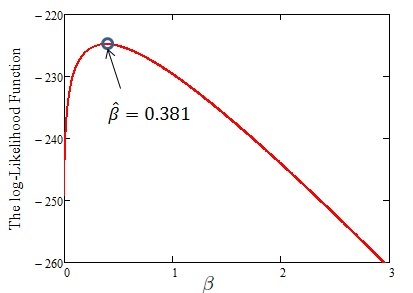} \\
Figure 7: The profile of the log-likelihood function of $\alpha, \beta$.
\end{center}

\vspace{0.2 cm}

\begin{center}
\includegraphics[width=7.2cm,height=6.2cm]{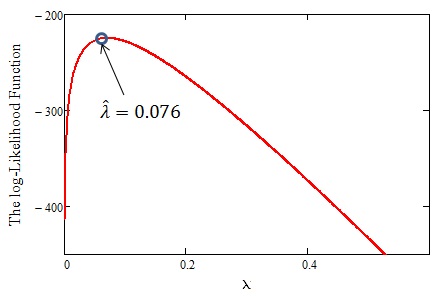}\\
Figure 8: The profile of the log-likelihood function of $\lambda$.
\end{center}

\noindent
The nonparametric estimate of the survival function using the Kaplan-Meier method and its fitted parametric estimations when the distribution is assumed to be $FW, W, LFR, EW, GLFR, EFW$ and $EFWE$ are computed and plotted in Figure 9.

\begin{center}
\includegraphics[width=10cm,height=6cm]{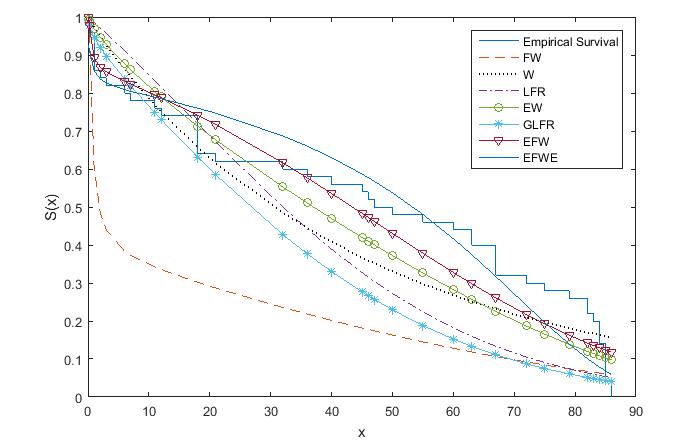}\\
Figure 9: The Kaplan-Meier estimate of the survival function for the data.
\end{center}

\noindent
Figures 10 and 11 give the form of the hazard rate and CDF for the $FW, W, LFR, EW, GLFR, EFW$ and $EFWE$  which are used to fit the data after replacing the unknown parameters included in each distribution by their MLE.

\begin{center}
\includegraphics[width=10cm,height=6cm]{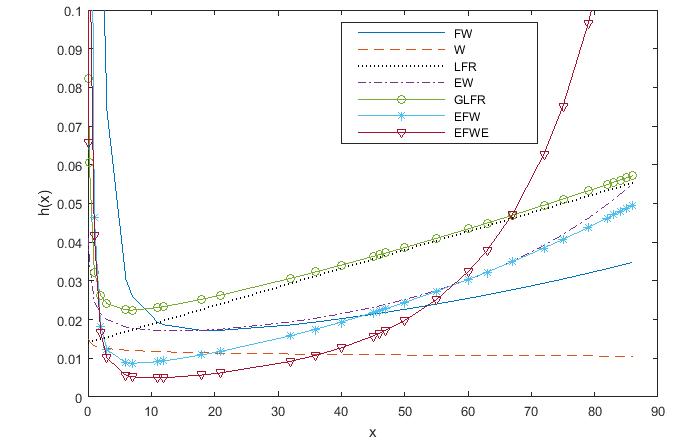}\\
Figure 10:  The Fitted hazard rate function for the data.
\end{center}

\vspace{0.4 cm}

\begin{center}
\includegraphics[width=10cm,height=6cm]{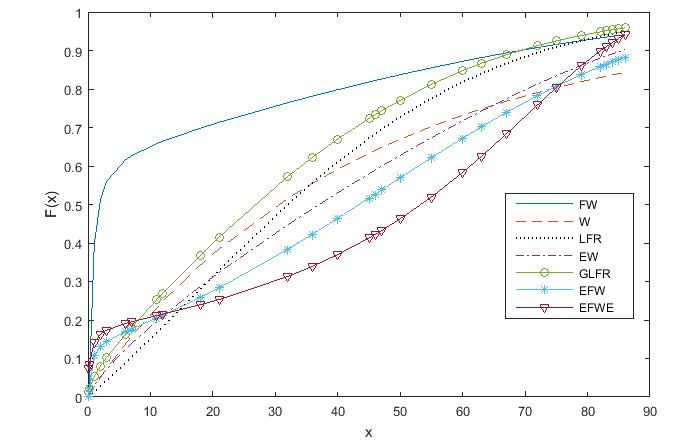}\\
Figure 11: The Fitted cumulative distribution function for the data.
\end{center}

\section{Conclusions}
A new distribution, based on  exponential generalized distributions, has been proposed and its properties are studied. The idea is to add parameter to a flexible Weibull extension distribution, so that the hazard function is either increasing or more importantly, bathtub shaped. Using  Weibull  generator component, the distribution has flexibility to model the second peak in a distribution. We have shown that the exponential flexible Weibull extension  EFWE distribution fits certain well-known data sets better than existing modifications of the exponential generalized family of probability distribution.


\end{document}